\newtheorem{theorem}{Theorem}
\newtheorem{lemma}{Lemma}
\begin{document}
\title{A new rate of convergence estimate for homogeneous discrete-time nonlinear Markov chains}

\author{A.A. Shchegolev \footnote{National Research University Higher School of Economics, Moscow, Russia, e-mail:\newline ashchegolev@hse.ru.}}

\maketitle

\abstract{In the paper, we study a new rate of convergence estimate for homogeneous discrete-time nonlinear Markov chains based on the Markov-Dobrushin condition. This result generalizes the convergence estimates for any positive number of transition steps. An example of a class such a process provided indicates that such types of estimates considering several transition steps may be applicable when one transition can not guarantee any convergence. Moreover, a better estimate can be obtained for a higher number of transitions steps. A law of large numbers is presented for a class of ergodic nonlinear Markov chains with finite state space that may serve as a basis for nonparametric estimation and other statistics.\newline
Keywords: 60J05; Nonlinear Markov chains; ergodicity; rate of convergence; law of large numbers}

\section{Introduction} 
\label{S:1}
The article considers estimating the rate of convergence to an invariant measure for a specific class of nonlinear Markov chains. The work develops the approach presented in the article \cite{shchegolev:2021}, which initially adopts the idea of \cite{butkovsky:2014} for two-step transition kernels. In these works the Markov-Dobrushin type condition plays an essential role
\begin{align}
\sup_{\mu,\nu\in\mathcal{P}(E)}\|P_\mu(x,\cdot) - P_\nu(y,\cdot)\|_{TV} \le 2(1-\alpha),\quad 0 < \alpha < 1, \quad x,y\in E,
\label{NonlinearDobrushinAnalog}
\end{align}
as well as the following restrictive condition
\begin{align}
\|P_\mu(x,\cdot) - P_\nu(x,\cdot)\|_{TV} \le \lambda \|\mu - \nu\|_{TV},\quad \lambda\in [0,\alpha],\quad x\in E,\quad \mu,\nu \in \mathcal{P}(E).
\label{ButkovskyLambda}
\end{align}

The current article further develops the approach from \cite{shchegolev:2021}, checking the ergodicity conditions for discrete-time homogeneous nonlinear Markov chains by taking $k$-step transition probabilities, where $k$ is an arbitrary positive number. An example of such a process is presented, illustrating that the estimate taken on multiple steps is applicable while the one-step estimate is not. This example also provided the rate of convergence calculation for $k=2$ and $k=3$, indicating that estimates for the higher number of steps can be slightly better. Finally, the law of large numbers is deduced for the nonlinear Markov chains for the case of finite state space.

The rest of the article is organized as follows. In Section 2, the results generalizing estimates for $k$-step are provided.
Section 3 provides an example of such a discrete-time homogeneous nonlinear Markov chain with an explicit rate of convergence calculation. The final section establishes the law of large numbers for nonlinear Markov chains that satisfy the conditions for the existence and uniqueness of invariant measure.

\section{Rate of convergence estimate for k-step transitions}
\label{S:2}
Let the process $\left(X_n^\mu\right)_{n\in\mathbb{Z}_{+}}$ be a nonlinear Markov chain defined on the finite state space $(E, \mathcal{E})$, initial distribution $\mu = \text{Law}\left(X_0^\mu\right)$, $\mu \in \mathcal{P}(E)$ and transition probabilities\newline $P_{\mu_n}(x, B) = \mathbb{P}_{\mu_n}\left(X_{n+1}^\mu \in B | X_n^\mu = x\right)$,
where $x \in E$, $B \in \mathcal{E}$, $n \in \mathbb{Z}_{+}$ and $\mu_n := \text{Law}(X_n^\mu)$. 

Let $\mu, \nu \in \mathcal{P}(E)$, then the total variation distance between two probability measures may be defined as follows:
\vspace{-0.5em}
\begin{equation*}
\|\mu - \nu\|_{TV} = 2\sup_{A\in \mathcal{E}}|\mu(A) - \nu(A)| = \int_E |\mu(dx) - \nu(dx)|.
\vspace{-0.25em}\end{equation*}

According to the \cite{butkovsky:2014} results, the nonlinear Markov chain is a uniformly ergodic process, and the existence of a unique invariant measure $\pi$ is guaranteed if the conditions \eqref{NonlinearDobrushinAnalog} and \eqref{ButkovskyLambda} are satisfied.

Then in case when $\lambda < \alpha$ we have an exponential rate of convergence
\vspace{-0.5em}
\begin{equation*}
\|\mu_n - \pi\|_{TV} \le 2(1-(\alpha - \lambda))^n,\quad n\in\mathbb{Z}_+,
\vspace{-0.25em}\end{equation*}
while in case $\lambda = \alpha$ there is a linear convergence
\vspace{-0.5em}
\begin{equation*}
\|\mu_n - \pi\|_{TV} \le \frac{2}{\lambda n},\quad n\in\mathbb{Z}_+.
\vspace{-0.25em}\end{equation*}
Otherwise, when $\lambda > \alpha$, there may be either an infinite number of invariant measures or no invariant measure at all.

The paper \cite{shchegolev:2021} proposes a generalization of this approach to a two-step transition probability kernel. The current article elaborates the idea of this paper and mainly follows along the same lines by using $k$-step transition probabilities, $k > 0$.

\begin{theorem}[Existence and uniqueness of an invariant measure]
Let the process $X$ have a $k$-step transition probabilities $Q_{\mu_n}(x, A): = P_{\mu_n}(X^\mu_{n+k}\in A | X^\mu_{n} = x)$ and satisfies the following conditions:
\begin{align}
\sup_{\mu,\nu\in\mathcal{P}(E)}\|Q_{\mu}(x,\cdot) - Q_{\nu}(y,\cdot)\|_{TV} \le 2(1-\alpha_k),
\label{NonlinearDobrushinAnalog2}
\end{align}
where $0 < \alpha_k < 1, \quad x,y\in E$,
\begin{align}
\|Q_{\mu}(x,\cdot) - Q_{\nu}(x,\cdot)\|_{TV} \le \lambda_k \|\mu - \nu\|_{TV},
\label{Lambda2}
\end{align}
where $\lambda_k \in [0,\alpha_k],\quad x\in E,\quad \mu,\nu \in \mathcal{P}(E)$,
\begin{align}
\|P_{\mu}(x,\cdot) - P_{\nu}(x,\cdot)\|_{TV} \le \lambda_1\|\mu - \nu\|_{TV},\,\,\lambda_1 < \infty.
\label{Lambda1}
\end{align}
Then the process $X$ has a unique invariant measure $\pi$ and for any probability measure $\mu \in \mathcal{P}(E)$ the convergence is true:
\begin{flalign}\label{pi_lambda<alpha}
\|\mu_n - \pi\|_{TV} \le \|\mu_0 - \pi\|_{TV}(1-\alpha_k  + \lambda_k)^{[n/k]}(1 + \lambda_1)^{[n \bmod k]}.
\end{flalign}
while in case $\lambda_k = \alpha_k$
\begin{align}
\|\mu_n - \pi\|_{TV} \le \frac{\|\mu_0 - \pi\|_{TV}}{2 + \lambda_k n\|\mu_0 - \pi\|_{TV}}(1 + \lambda_1)^{[n\bmod k]}.
\label{pi_lambda=alpha}
\end{align}
\label{th1}
\end{theorem}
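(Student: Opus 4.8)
The plan is to pass to the $k$-step sub-chain $(\mu_{kq})_{q\in\mathbb{Z}_+}$, governed by $\mu_{k(q+1)}=\mu_{kq}Q_{\mu_{kq}}$ with $Q$ treated — as in \eqref{NonlinearDobrushinAnalog2}--\eqref{Lambda2} — as a kernel depending on the current block measure, prove a contraction-type estimate for it, and then recover arbitrary $n$ via the split $n=k[n/k]+(n\bmod k)$. Writing $\Phi(\mu):=\mu Q_\mu$ and $\beta:=\|\mu-\nu\|_{TV}$, the central step is the refined bound
\[
\|\Phi(\mu)-\Phi(\nu)\|_{TV}\ \le\ (1-\alpha_k+\lambda_k)\,\beta-\tfrac{\lambda_k}{2}\,\beta^{2},\qquad \mu,\nu\in\mathcal{P}(E).
\]
To obtain it I would decompose $\mu=(\mu\wedge\nu)+(\mu-\mu\wedge\nu)$ and likewise for $\nu$, where $\mu\wedge\nu$ has mass $1-\beta/2$ and the normalised excess measures $\mu',\nu'$ are mutually singular probability measures. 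On the common part, \eqref{Lambda2} bounds $\|(\mu\wedge\nu)Q_\mu-(\mu\wedge\nu)Q_\nu\|_{TV}$ by $(1-\beta/2)\lambda_k\beta$; on the singular part one recognises $(\mu-\mu\wedge\nu)Q_\mu-(\nu-\mu\wedge\nu)Q_\nu$ as a mass-$(\beta/2)$ average of differences $Q_\mu(x,\cdot)-Q_\nu(y,\cdot)$ and applies \eqref{NonlinearDobrushinAnalog2} to get a term $\le(\beta/2)\cdot 2(1-\alpha_k)$. Summing the two contributions gives the displayed inequality.

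Next I would settle existence and uniqueness of $\pi$. If $\lambda_k<\alpha_k$, then dropping the quadratic term makes $\Phi$ a strict contraction of the complete metric space $(\mathcal{P}(E),\|\cdot\|_{TV})$, so Banach's theorem yields a unique $\Phi$-fixed point $\pi$; this $\pi$ is then one-step invariant because $\pi P_\pi$ is again $\Phi$-invariant (evolve $\pi$ for $k+1$ steps in either order), so $\pi P_\pi=\pi$ by uniqueness, and any one-step-invariant measure is $\Phi$-invariant and hence equals $\pi$. If $\lambda_k=\alpha_k$, $\Phi$ is merely non-expansive, so I would get existence of a one-step-invariant $\pi$ from Brouwer's theorem applied to the continuous self-map $\mu\mapsto\mu P_\mu$ of the compact convex set $\mathcal{P}(E)$ — continuity being immediate from \eqref{Lambda1} — and uniqueness from the quadratic estimate, which forces any two $\Phi$-orbits to coalesce.

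It remains to iterate. Put $b_q:=\|\mu_{kq}-\pi\|_{TV}$; taking $\nu=\pi$ in the displayed inequality (legitimate since $\pi=\Phi(\pi)$) gives $b_{q+1}\le(1-\alpha_k+\lambda_k)b_q-\tfrac{\lambda_k}{2}b_q^{2}$. For $\lambda_k<\alpha_k$ this immediately yields $b_q\le(1-\alpha_k+\lambda_k)^{q}b_0$, and for $\lambda_k=\alpha_k$ it reads $b_{q+1}\le b_q-\tfrac{\lambda_k}{2}b_q^{2}$, hence $b_{q+1}^{-1}\ge b_q^{-1}+\tfrac{\lambda_k}{2}$ and so $b_q\le 2b_0/(2+\lambda_k q\,b_0)$. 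For the remaining $n\bmod k$ steps within a block, iterating \eqref{Lambda1} together with the fact that applying a fixed Markov kernel does not increase the total variation distance gives $\|\mu_{j+1}-\pi\|_{TV}\le(1+\lambda_1)\|\mu_j-\pi\|_{TV}$, whence $\|\mu_n-\pi\|_{TV}\le(1+\lambda_1)^{[n\bmod k]}\,b_{[n/k]}$. Combining with the two bounds on $b_q$ yields \eqref{pi_lambda<alpha} and a bound of the form \eqref{pi_lambda=alpha}.

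The crux is the refined $k$-step estimate — extracting the negative quadratic term by handling the common part through \eqref{Lambda2} and the mutually singular parts through \eqref{NonlinearDobrushinAnalog2}; a plain triangle-inequality split ($\|\Phi(\mu)-\Phi(\nu)\|_{TV}\le\|\mu Q_\mu-\nu Q_\mu\|_{TV}+\|\nu Q_\mu-\nu Q_\nu\|_{TV}$) loses exactly this term and gives only the non-expansive bound $(1-\alpha_k+\lambda_k)\beta$, which carries no information when $\lambda_k=\alpha_k$. The remaining work is bookkeeping: keeping the $k$-step sub-chain and the original chain in sync, in particular verifying that the invariant measure built at the $k$-step level is genuinely one-step invariant, and checking that the within-block steps really are controlled by \eqref{Lambda1} alone, with no Markov–Dobrushin input.
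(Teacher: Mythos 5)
Your proposal is correct, and its engine is the same as the paper's: the decomposition of $\mu$ and $\nu$ into the common part $\mu\wedge\nu$ (the paper's $\eta$ with $d\eta=((d\mu/d\nu)\wedge 1)\,d\nu$) plus mutually singular excesses, with \eqref{Lambda2} applied to the common part and \eqref{NonlinearDobrushinAnalog2} to the product of the normalised excesses, yielding exactly the bound $\|\mu Q_\mu-\nu Q_\nu\|_{TV}\le(1-\alpha_k+\lambda_k)\beta-\tfrac{\lambda_k}{2}\beta^2$ that appears (in the variables $p_0=\beta/2$) in the paper's proof of Theorem \ref{th2}; the within-block steps are likewise handled identically via \eqref{Lambda1} and the non-expansiveness of a fixed kernel, as in \eqref{OneStep1lambda}. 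You differ in two auxiliary places. First, for existence of $\pi$ the paper shows $(\mu_n)$ is Cauchy in $(\mathcal{P}(E),\|\cdot\|_{TV})$ and passes to the limit, whereas you invoke Banach's theorem when $\lambda_k<\alpha_k$ and Brouwer's theorem on the simplex $\mathcal{P}(E)$ when $\lambda_k=\alpha_k$; both are valid here (your check that $\pi P_\pi$ is again a $\Phi$-fixed point is the right way to transfer $k$-step invariance to one-step invariance), though the Brouwer route leans on finiteness of $E$ while the Cauchy argument would survive in more general state spaces. Second, for the case $\lambda_k=\alpha_k$ the paper solves the recursion $p_{n+1}\le p_n(1-\lambda_k p_n)$ via the integral comparison Lemma \ref{lemma}, whereas you use the elementary reciprocal estimate $b_{q+1}^{-1}\ge b_q^{-1}+\lambda_k/2$; these give the same asymptotics, and your honest phrasing that you obtain ``a bound of the form'' \eqref{pi_lambda=alpha} is apt, since the exact constants (and whether $n$ or $[n/k]$ sits in the denominator) depend on bookkeeping that the paper itself glosses over.
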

\vspace{-0.5em}
In order to prove this theorem, we need the following theorem on the convergence of any two initial probability measures for the process under consideration.

\begin{theorem}
Let the process $X$ have a $k$-step transition probabilities $Q_\mu(x,B)$ and satisfies conditions \eqref{NonlinearDobrushinAnalog2}, \eqref{Lambda2} and \eqref{Lambda1} of the Theorem \ref{th1}.
Then for any pair of probability measures $\mu, \nu \in \mathcal{P}(E)$ the following convergence is true:
{\small
\begin{flalign}\label{lambda<alpha}
\|\mu_n - \nu_n\|_{TV} \le  \|\mu_0 - \nu_0\|_{TV}(1-\alpha_k  + \lambda_k)^{[n/k]}(1 + \lambda_1)^{[n \bmod k]}.
\end{flalign}}
while in case $\lambda_k = \alpha_k$
\begin{align}
\|\mu_n - \nu_n\|_{TV} \le \frac{\|\mu_0 - \nu_0\|_{TV}}{2 + \lambda_k n\|\mu_0 - \nu_0\|_{TV}}(1 + \lambda_1)^{[n\bmod k]}.
\label{lambda=alpha}
\end{align}
\label{th2}
\end{theorem}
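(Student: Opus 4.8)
The plan is to derive both \eqref{lambda<alpha} and \eqref{lambda=alpha} from two recursive inequalities for the sequence $d_n:=\|\mu_n-\nu_n\|_{TV}$: a crude \emph{one-step} bound $d_{n+1}\le(1+\lambda_1)d_n$ that will absorb the $[n\bmod k]$ ``leftover'' transitions, and a sharp \emph{$k$-step block} bound that will absorb the $[n/k]$ full blocks. A preliminary observation is that for a homogeneous nonlinear chain the update $\mu_n\mapsto\mu_{n+1}=\mu_n P_{\mu_n}$ is a deterministic map of $\mu_n$, so $\mu_{n+k}$ is a deterministic functional of $\mu_n$ and $Q_{\mu_n}(x,\cdot)=\bigl(P_{\mu_n}P_{\mu_{n+1}}\cdots P_{\mu_{n+k-1}}\bigr)(x,\cdot)$; in particular $\mu_{n+k}(A)=\int_E Q_{\mu_n}(x,A)\,\mu_n(dx)$, and likewise for $\nu$ with $Q_{\nu_n}$. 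Hence conditions \eqref{Lambda2} and \eqref{NonlinearDobrushinAnalog2} are available at every time.

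First I would prove the one-step bound. Splitting $\mu_{n+1}(A)-\nu_{n+1}(A)=\int_E[P_{\mu_n}(x,A)-P_{\nu_n}(x,A)]\mu_n(dx)+\int_E P_{\nu_n}(x,A)\,(\mu_n-\nu_n)(dx)$, the first integral is at most $\tfrac12\lambda_1\|\mu_n-\nu_n\|_{TV}$ in absolute value by applying \eqref{Lambda1} pointwise in $x$ and integrating; the second is at most $\tfrac12\|\mu_n-\nu_n\|_{TV}$ because the function $x\mapsto P_{\nu_n}(x,A)$ is $[0,1]$-valued and the signed measure $\mu_n-\nu_n$ has zero total mass, so its positive and negative parts each have mass $\tfrac12 d_n$. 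Taking twice the supremum over $A\in\mathcal E$ gives $d_{n+1}\le(1+\lambda_1)d_n$.

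The core step is the block estimate. Write $\mu_n=\beta+\rho_\mu$ and $\nu_n=\beta+\rho_\nu$ with $\beta=\mu_n\wedge\nu_n$ and $\rho_\mu=(\mu_n-\nu_n)^+$, $\rho_\nu=(\mu_n-\nu_n)^-$ mutually singular of common mass $m:=\tfrac12 d_n$, so $\beta(E)=1-m$. Then, with $\bar\mu=\rho_\mu/m$, $\bar\nu=\rho_\nu/m\in\mathcal P(E)$,
\[
\mu_{n+k}(A)-\nu_{n+k}(A)=\int_E[Q_{\mu_n}(x,A)-Q_{\nu_n}(x,A)]\,\beta(dx)+m\!\int_E\!\int_E[Q_{\mu_n}(x,A)-Q_{\nu_n}(y,A)]\,\bar\mu(dx)\,\bar\nu(dy).
\]
The first term is bounded by $\tfrac12\lambda_k\|\mu_n-\nu_n\|_{TV}\,\beta(E)=\lambda_k m(1-m)$ via \eqref{Lambda2}; the factor $(1-m)$, which appears only because one integrates against $\beta$ rather than a probability measure, is the source of the quadratic gain. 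The second term is bounded by $m(1-\alpha_k)$ via \eqref{NonlinearDobrushinAnalog2}. Adding, and using that the bound is symmetric in $(\mu,\nu)$, one obtains
\[
d_{n+k}\ \le\ (1-\alpha_k+\lambda_k)\,d_n-\tfrac{\lambda_k}{2}\,d_n^2 .
\]

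It remains to iterate along $n=k[n/k]+[n\bmod k]$, applying the block bound $[n/k]$ times and the one-step bound $[n\bmod k]$ times (the order is immaterial, both being scalar multipliers). If $\lambda_k<\alpha_k$ we discard the nonpositive quadratic term and get \eqref{lambda<alpha} at once. If $\lambda_k=\alpha_k$, the block bound reads $d_{(j+1)k}\le d_{jk}\bigl(1-\tfrac{\lambda_k}{2}d_{jk}\bigr)=:f(d_{jk})$; since $f$ is a downward parabola with maximum value $\tfrac1{2\lambda_k}<\tfrac1{\lambda_k}$, after one block all the relevant arguments lie in the interval on which $f$ is nondecreasing, and a short induction on $j$ then yields $d_{jk}\le 2d_0/(2+\lambda_k j\, d_0)$; multiplying by $(1+\lambda_1)^{[n\bmod k]}$ gives the bound of the form \eqref{lambda=alpha}. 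I expect the critical case to be the only genuinely delicate point: one must not discard the $(1-m)$ factor (it is the sole contraction mechanism when $\lambda_k=\alpha_k$), and the induction relies on the elementary fact that $f$ maps the admissible range of $d_{jk}$ into its own monotonicity interval; the two decompositions and their estimates are otherwise routine, provided the normalisation $\|\cdot\|_{TV}=2\sup_A|\cdot|$ is tracked carefully throughout.
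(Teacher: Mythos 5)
Your proof is correct and, for the main estimates, essentially coincides with the paper's: your $\beta=\mu_n\wedge\nu_n$ with singular remainders $\rho_\mu,\rho_\nu$ is exactly the paper's decomposition via $d\eta=((d\mu/d\nu)\wedge 1)\,d\nu$, and the two-term bound $d_{n+k}\le(1-\alpha_k+\lambda_k)d_n-\tfrac{\lambda_k}{2}d_n^2$, obtained from \eqref{Lambda2} on the common part and \eqref{NonlinearDobrushinAnalog2} on the normalized singular parts, is the same inequality the paper derives (there written as $2p_0(\lambda_k-\lambda_k p_0+1-\alpha_k)$ with $p_0=d_n/2$), as is the one-step multiplier $(1+\lambda_1)$ coming from \eqref{Lambda1}. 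The one genuine divergence is the critical case $\lambda_k=\alpha_k$: the paper invokes Lemma \ref{lemma} (comparison with $g(x)=\int_x^{a_0}dt/(t\psi(t))$, $\psi(t)=\lambda_k t$), whereas you verify $d_{jk}\le 2d_0/(2+\lambda_k j\,d_0)$ by direct induction after observing that one application of the block map lands in the monotonicity interval of $t\mapsto t(1-\lambda_k t/2)$; this is more elementary and self-contained, at the cost of the slicker general statement of the lemma. Note also that what your induction (and, in fact, the paper's own derivation) yields is
$\|\mu_n-\nu_n\|_{TV}\le \frac{2\|\mu_0-\nu_0\|_{TV}}{2+\lambda_k[n/k]\|\mu_0-\nu_0\|_{TV}}(1+\lambda_1)^{[n\bmod k]}$,
with $[n/k]$ counting completed blocks and a factor $2$ in the numerator; the displayed form \eqref{lambda=alpha}, with $n$ in place of $[n/k]$ and without that factor, does not literally follow from the iteration (at $n=0$ it would assert $d_0\le d_0/2$), so your version is the one the argument actually supports.
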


Let us prove the theorem \ref{th2}.
\begin{proof}
Let $P: E \times \mathcal{E} \rightarrow [0,1]$ be a transition kernel, $\varphi: E\rightarrow \mathbb{R}$ be a measurable function and probability measure $\mu \in \mathcal{P}(E)$; denote
$\mu P:= \int_EP(x,dt)\mu(dx)$; in case when $P$ depends on measure $\mu$, we have: $\mu_1(\mu):=\mu P_\mu:= \int_EP_\mu(x,dt)\mu(dx)$, then $k$-step transition probability kernel
\vspace{-0.5em}
\begin{equation*}
Q_\mu(x,dy) = \int P_\mu(x,dx_1)P_{\mu_1(\mu)}(x_1,dx_2)\dots P_{\mu_{k-1}(\mu)}(x_{k-1},dy).
\vspace{-0.25em}\end{equation*}

Consider the total variation distance between measures after applying the $k$-step transition kernel. For any probability measure $\mu, \nu \in \mathcal{P}(E)$ denote $$d\eta = ((d\mu / d\nu) \wedge 1)d\nu$$ and apply the triangle inequality to obtain
\begin{align*}
\|\mu Q_{\mu}- \nu  Q_{\nu}\|_{TV} =
\|(\eta + (\mu - \eta)) Q_{\mu}- (\eta - (\nu - \eta))  Q_{\nu}\|_{TV} =\\
= \int_E |\eta Q_{\mu}(dx) + (\mu - \eta)Q_{\mu}(dx)- \eta Q_{\nu}(dx) - (\nu - \eta)Q_{\nu}(dx)| \le\\
\le \int_E |\eta Q_{\mu}(dx) - \eta Q_{\nu}(dx)|+ \int_E |(\mu - \eta)Q_{\mu}(dx) - (\nu - \eta)Q_{\nu}(dx)| \le \\
\le \|\eta Q_{\mu} - \eta Q_{\nu}\|_{TV} + \|(\mu - \eta)Q_{\mu} - (\nu - \eta)Q_{\nu}\|_{TV}.
\end{align*}

Consider the first term, applying Jensen's inequality and \eqref{Lambda2} to it, and using the following fact:
 $\eta(E) = 1 - \|\mu-\nu\|_{TV}/2$. We get % \linebreak
{\small
\begin{align*}
\|\eta Q_{\mu} - \eta Q_{\nu}\|_{TV} &= \int_E \left|\int_EQ_{\mu}(x,dy)\eta(dx) - \int_EQ_{\nu}(x,dy)\eta(dx)\right| \le \\
&\le \int_E\int_E|Q_{\mu}(x,dy) - Q_{\nu}(x,dy)|\eta(dx) \le \\ 
&\le \lambda_k \|\mu-\nu\|_{TV}\left(1 - \frac{1}{2}\|\mu-\nu\|_{TV}\right).
\end{align*}
}
Then the second term
{\small
\begin{align*}
\|(\mu - \eta)Q_{\mu} - (\nu - \eta)Q_{\nu}\|_{TV} = \int_E |(\mu - \eta)Q_{\mu}(dx) - (\nu - \eta)Q_{\nu}(dx)| =\\
=
\int_E \left|\int_EQ_{\mu}(x,dy)(\mu - \eta)(dx) - \int_EQ_{\nu}(x',dy)(\nu - \eta)(dx')\right|.
\end{align*}
}

Remind that $\mu_n = \text{Law}(X_n^\mu)$, $\nu_n = \text{Law}(X_n^\nu)$, denote $p_0 = \|\mu_0 - \nu_0\|_{TV}/2$, assuming $p_0>0$ (if $p_0=0$, then $p_k=0$ etc.).

Estimating the expression $\|\mu_k - \nu_k\|_{TV}$ from above.
% {\tiny
\begin{align*}
\|\mu_k - \nu_k\|_{TV} \le \lambda_k \|\mu_0 -  \nu_0\|_{TV} \left(1- \frac12 \|\mu_0 - \nu_0\|_{TV}\right)
\\
+ p_0 \int \left|\int Q_{\mu}(x, dy)\frac{(\mu_0-\eta_0)(dx)} {p_0}- \int Q_{\nu}(x', dy)\frac{(\nu_0-\eta_0)(dx')} {p_0}\right|
\\
=  2p_0 \lambda_k (1-p_0)  + p_0\int
\left|\iint (Q_{\mu}(x, dy) - Q_{\nu}(x', dy)) \frac{(\mu_0-\eta_0)(dx)} {p_0}  \frac{(\nu_0-\eta_0)(dx')} {p_0}\right|
\\
\le \,  2p_0 \lambda_k (1-p_0)  + p_0\iiint \left| Q_{\mu}(x, dy)- Q_{\nu}(x', dy)\right|\frac{(\mu_0-\eta_0)(dx)} {p_0}\frac{(\nu_0-\eta_0)(dx')} {p_0}
\\
\le 2p_0 \lambda_k (1-p_0)  +
2(1-\alpha_k)p_0 \iint \frac{(\mu_0-\eta_0)(dx)} {p_0} \frac{(\nu_0-\eta_0)(dx')} {p_0}
\\
= 2p_0 \lambda_k (1-p_0)  +
2p_0(1-\alpha_k) =
2p_0 (\lambda_k - \lambda_k p_0 + 1 - \alpha_k).
\end{align*}
% }
If $\lambda_k < \alpha_k$, we obtain
\vspace{-0.5em}
\begin{equation*}
\|\mu_k - \nu_k\|_{TV} \le \|\mu_0-\nu_0\|_{TV}(1 - \alpha_k + \lambda_k),
\vspace{-0.25em}\end{equation*}
while in the case $\lambda_k = \alpha_k$ we get
\vspace{-0.5em}
\begin{equation*}
\|\mu_k - \nu_k\|_{TV} \le 2p_0(1 - \lambda_kp_0),
\vspace{-0.25em}\end{equation*}
or
\vspace{-0.5em}
\begin{equation*}
p_k \le p_0(1 - \lambda_kp_0).
\vspace{-0.25em}\end{equation*}

In case $kn+i$, $0<i<k$ we have:
\begin{align*}
\|\mu_{kn+i} - \nu_{kn+i}\|_{TV} \le \lambda_i \|\mu_{kn} -  \nu_{kn}\|_{TV} \left(1- \frac12 \|\mu_{kn} - \nu_{kn}\|_{TV}\right)
\\
+ p_{kn} \int \left|\int P_{\mu}(x, dy)\frac{(\mu_{2n}-\eta_{2n})(dx)} {p_{2n}}- \int P_{\nu}(x', dy)\frac{(\nu_{2n}-\eta_{2n})(dx')} {p_{2n}}\right|
\\
=  2p_{2n} \lambda_i (1-p_{2n})  + p_{2n}\int
\left|\iint (P_{\mu}(x, dy) - P_{\nu}(x', dy)) \frac{(\mu_{2n}-\eta_{2n})(dx)} {p_{2n}}  \frac{(\nu_{2n}-\eta_{2n})(dx')} {p_{2n}}\right|
\\
\le \,  2p_{2n} \lambda_i (1-p_{2n})  + p_{2n}\iiint \left| P_{\mu}(x, dy)- P_{\nu}(x', dy)\right|\frac{(\mu_{2n}-\eta_{2n})(dx)} {p_{2n}}\frac{(\nu_{2n}-\eta_{2n})(dx')} {p_{2n}}
\\
\le 2p_{2n} (\lambda_i (1-p_{2n})  + 1) = (1 + \lambda_i)\|\mu_{2n} -  \nu_{2n}\|_{TV}.
\end{align*}
Hence,
\begin{align}
\|\mu_{2n+1} - \nu_{2n+1}\|_{TV} \le (1 + \lambda_1)\|\mu_{2n} -  \nu_{2n}\|_{TV}.
\label{OneStep1lambda}
\end{align}

Thus, iterating the estimate for $\lambda_k < \alpha_k$, we obtain by induction
\vspace{-0.5em}
\begin{equation*}
\|\mu_n - \nu_n\|_{TV} \le  \|\mu_0 - \nu_0\|_{TV}(1-\alpha_k  + \lambda_k)^{[n/k]}(1 + \lambda_1)^{[n \bmod k]}.
\vspace{-0.25em}\end{equation*}

In case $\alpha_k = \lambda_k$ we apply the following lemma.
\begin{lemma}
\label{lemma}
Let $a_0, a_1, \dots$ be a sequence of positive numbers. Assume that $0 < a_0 \le 1$ and the following estimate is true
\vspace{-0.5em}
\begin{equation*}
a_{n+1} \le a_n(1-\psi(a_n)),\quad n\in\mathbb{Z}_+,
\vspace{-0.25em}\end{equation*}
where $\psi: [0,\infty)\rightarrow [0,1]$ is a continuous non-decreasing function with $\psi(0) = 0$ and $\psi(x) > 0$ as $x>0$. Then
\vspace{-0.5em}
\begin{equation*}
a_n \le g^{-1}(n)
\vspace{-0.25em}\end{equation*}
for all $n\in\mathbb{Z}_+$, where
\vspace{-0.5em}
\begin{equation*}
g(x) = \int_x^{a_0}\frac{dt}{t\psi(t)},\quad 0 < x \le 1.
\vspace{-0.25em}\end{equation*}
\end{lemma}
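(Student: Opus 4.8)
The plan is to prove the equivalent statement $g(a_n) \ge n$ for every $n \in \mathbb{Z}_+$ and then invert. First I would record the elementary properties of $g$. On $(0, a_0]$ the integrand $1/(t\psi(t))$ is positive and continuous (this is where $\psi(t) > 0$ for $t > 0$ enters), so $g$ is well defined, strictly decreasing, and $g(a_0) = 0$. Moreover, since $\psi \le 1$ we have $1/(t\psi(t)) \ge 1/t$, hence $g(x) \ge \ln(a_0/x) \to +\infty$ as $x \to 0^+$. Therefore $g$ is a continuous strictly decreasing bijection of $(0,a_0]$ onto $[0,\infty)$, so $g^{-1}$ is well defined on $[0,\infty)$ (in particular at every integer $n$) and is decreasing; consequently $g(a_n) \ge n$ will imply $a_n \le g^{-1}(n)$.

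Next I would prove $g(a_n) \ge n$ by induction on $n$. The base case $n = 0$ is the identity $g(a_0) = 0$. For the inductive step, assume $g(a_n) \ge n$. From the hypothesis and $\psi \le 1$ we get $0 < a_{n+1} \le a_n(1-\psi(a_n)) \le a_n$, so both endpoints lie in $(0,a_0]$ and
\[ g(a_{n+1}) - g(a_n) = \int_{a_{n+1}}^{a_n}\frac{dt}{t\psi(t)}. \]
On $[a_{n+1}, a_n]$ the map $t \mapsto t\psi(t)$ is non-decreasing (a product of non-negative non-decreasing functions), hence $t\psi(t) \le a_n\psi(a_n)$ there, which gives
\[ \int_{a_{n+1}}^{a_n}\frac{dt}{t\psi(t)} \ge \frac{a_n - a_{n+1}}{a_n\psi(a_n)}. \]
Finally the recursion yields $a_n - a_{n+1} \ge a_n - a_n(1-\psi(a_n)) = a_n\psi(a_n)$, so the right-hand side is at least $1$, whence $g(a_{n+1}) \ge g(a_n) + 1 \ge n+1$. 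This closes the induction, and applying the decreasing map $g^{-1}$ to $g(a_n) \ge n$ gives $a_n \le g^{-1}(n)$ for all $n \in \mathbb{Z}_+$.

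The computations here are routine; the only places needing care are the edge cases surrounding the definition of $g$: that $t\psi(t) > 0$ on $(0, a_0]$ so the integrand is meaningful, that $\int_0^{a_0} dt/(t\psi(t)) = \infty$ (a consequence of $\psi \le 1$) so that $g^{-1}$ is genuinely defined on all of $[0,\infty)$, and that every $a_n$ remains strictly positive so that $\psi(a_n) > 0$ and no division by zero occurs in the induction step — this last point is exactly the standing hypothesis that $(a_n)$ is a sequence of positive numbers. I expect the main obstacle, such as it is, to be merely stating these points cleanly; the substance of the argument is the single monotonicity bound $t\psi(t) \le a_n\psi(a_n)$ on $[a_{n+1}, a_n]$ together with $a_n - a_{n+1} \ge a_n\psi(a_n)$.
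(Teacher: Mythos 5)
Your proof is correct and follows essentially the same route as the paper's: both reduce to showing $g(a_{n+1})-g(a_n)\ge 1$ via the monotonicity of $t\mapsto t\psi(t)$ together with $a_n-a_{n+1}\ge a_n\psi(a_n)$ (the paper phrases the increment through the mean value theorem, you bound the integrand directly, which is an immaterial difference). Your added care about why $g$ is unbounded (using $\psi\le 1$) and why $g^{-1}$ is defined on all of $[0,\infty)$ is a small improvement in rigor over the paper's one-line assertion, but not a different argument.
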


This lemma and its proof are given in a slightly different version in \cite[Lemma~1.4.2]{butkovsky:2013}. Since we use a slightly modified version with a different upper limit in the integral, the proof is presented for the reader's convenience, even though it coincides with the source.

\begin{proof}
Notice that function $g^{-1}$ exists, as $g$ is unbounded, non-negative and strictly decreasing. Since $\psi(x) > 0$, then $a_{n+1} \le a_n$ for any $n \in \mathbb{N}$. Therefore we can find $s\in[a_{n+1}, a_n]$ such that $$g(a_{n+1})-g(a_{n}) = g'(s)(a_{n+1}-a_n) = -\frac{a_{n+1}- a_n}{s\psi(s)} \ge \frac{a_n\psi(a_n)}{s\psi(s)} \ge 1.$$
Thus, $g(a_n) \ge n$ and $a_n \le g^{-1}(n)$.
\end{proof}

Applying the lemma \ref{lemma} for $a_{n+k}$ and $\psi(t) = \lambda_k t$ we obtain
\vspace{-0.5em}
\begin{equation*}
p_n \le g^{-1}(n) = \frac{1}{\lambda_k n + \frac{1}{p_0}} = \frac{p_0}{1+p_0\lambda_k n}.
\vspace{-0.25em}\end{equation*}

Generalizing the result for arbitrary $n$ using \eqref{OneStep1lambda} we get
\vspace{-0.5em}
\begin{equation*}
\|\mu_n - \nu_n\|_{TV} \le \frac{\|\mu_0 - \nu_0\|_{TV}}{2 + \lambda_k n\|\mu_0 - \nu_0\|_{TV}}(1 + \lambda_1)^{[n\bmod k]}.
\vspace{-0.25em}\end{equation*}
\end{proof}
\vspace{-0.5em}

Next, we proceed to the proof of Theorem \ref{th1}.
\begin{proof}
Consider a sequence of probability measures $\left(\mu_n\right)_{n\in\mathbb{N}}$. According to the Theorem \ref{th2}, by virtue of \eqref{lambda<alpha} and \eqref{lambda=alpha}, for any $m,n \in \mathbb{N}$
\vspace{-0.5em}
\begin{equation*}
\|\mu_n - \mu_{n+m}\|_{TV} \le \frac{\|\mu_0 - \mu_m\|_{TV}}{2 + \lambda_k n\|\mu_0 - \mu_m\|_{TV}}(1 + \lambda_1)^{[n \bmod k]}.
\vspace{-0.25em}\end{equation*}
Then $\left(\mu_n\right)_{n\in\mathbb{N}}$ is a Cauchy sequence in complete metric space $(\mathcal{P}(E), \|\cdot\|_{TV})$ and we can find $\pi\in\mathcal{P}(E)$ such that $\lim_{n\rightarrow \infty}\|\mu_n-\pi\|_{TV} = 0$.

Let us show that the limiting measure $\pi$ is invariant. For this, we can use the triangle inequality and the condition \eqref{OneStep1lambda} as $n\rightarrow \infty$:
\vspace{-0.5em}
\begin{equation*}
\|\pi P_\pi - \mu_{n+1}\|_{TV} = \|\pi P_\pi - \mu_{n}P_{\mu_{n}}\|_{TV} \le (1+\lambda_1)\|\pi-\mu_n\|_{TV}\rightarrow 0,
\vspace{-0.25em}\end{equation*}
while $\mu_{n+1} \rightarrow \pi$, we have
\vspace{-0.5em}
\begin{equation*}
\|\pi P_\pi - \pi\|_{TV} \le \|\pi P_\pi - \mu_{n+1}\|_{TV} + \|\mu_{n+1} - \pi\|_{TV} \rightarrow 0.
\vspace{-0.25em}\end{equation*}
From where we get $\pi = \pi P_\pi$.

To prove the uniqueness of the invariant measure $\pi$, assume that $\nu\in\mathcal{P}(E)$ is such that $\nu \ne \pi$ and $\nu = \nu P_\nu$, then iteratively applying the results \eqref{lambda<alpha} and \eqref{lambda=alpha}, for a sufficiently large $n$ we obtain a contradiction
\vspace{-0.5em}
\begin{equation*}
\|\nu-\pi\|_{TV} = \|\nu Q_\nu-\pi Q_\pi\|_{TV} = \|\nu Q^n_\nu-\pi Q^n_\pi\|_{TV} < \|\nu-\pi\|_{TV}.
\vspace{-0.25em}\end{equation*}
Thus, the process $(X_n^{\mu})_{n\in\mathbb{Z}}$ has a unique invariant measure $\pi$.
\end{proof}

In section \ref{S:3} we show an example of a discrete-time nonlinear Markov chain satisfying such ergodic conditions.

\section{An example of ergodic nonlinear Markov chain}
\label{S:3}
Let us show that this result can be used in cases where the one-step estimate is inapplicable, and, at the same time, violation of \cite{butkovsky:2014} conditions does not prevent exponential convergence for some nonlinear Markov chains. Consider the following discrete nonlinear Markov chain $X_n^\mu$ with state space $(E, \mathcal{E}) = \left(\{1, 2, 3, 4\}, 2^{\{1, 2, 3, 4\}}\right)$, the initial distribution $\mu_0$ and the transition probability matrix $P_{\mu_0}(i, j)$, defined as follows:
\vspace{-0.5em}
\begin{equation*}
\mu_0 = \begin{pmatrix}\mu(\{1\}) & \mu(\{2\})& \mu(\{3\})& \mu(\{4\})\end{pmatrix},
\vspace{-0.25em}\end{equation*}
\vspace{-0.5em}
\begin{equation*}
P_{\mu_0}(i, j) =
\begin{pmatrix}
0 & \gamma  \mu(\{1\}) & 0.5 - \gamma  \mu(\{1\}) & 0.5\\
0.5 & 0.5 & 0 & 0 \\
0.5 & 0 & 0.5 & 0 \\
0 & 0.5 & 0 & 0.5
\end{pmatrix},
\vspace{-0.25em}\end{equation*}
where $0 \le \gamma \le 0.5$.

We can notice that for a given process the conditions \eqref{NonlinearDobrushinAnalog} and \eqref{ButkovskyLambda} do not guarantee convergence to an invariant measure, since $\lambda > \alpha$, since $\alpha = 0$ and $\lambda = \gamma$.

However, if we consider the corresponding three-step transition probabilities matrix $Q_{\mu_0}(i,j)$,

\begin{align*}
&Q_{\mu_0}(i,j) = 
&\begin{pmatrix}
0.25 & 0.0625 (4 + \gamma + 4\gamma\nu_1) & 0.0625 (4 - \gamma - 4\gamma \nu_1) & 0.25 \\
0.25 & 0.0625 (4 + \gamma + 2\gamma (\nu_2 + \nu_3)) & 0.0625 (4 - \gamma - 2\gamma (\nu_2 + \nu_3)) & 0.25 \\
0.25 & 0.0625 (2 + \gamma + 2\gamma (\nu_2 + \nu_3)) & 0.0625 (6 - \gamma - 2\gamma (\nu_2 + \nu_3)) & 0.25 \\
0.25 & 0.0625 (6 + \gamma) & 0.0625 (2 - \gamma) & 0.25
\end{pmatrix},
\end{align*}
we may obtain the following result. We have $\lambda_3 < \alpha_3$, as $\lambda_3 = \gamma/4$, while $\alpha_3$ reaches its minimum for a pair of states $\{3, 4\}$ with the value in range $[0.75; 0.75 + 0.125 \gamma]$, thus $\alpha_3 = 0.75$.
Thus, the proposed estimate can guarantee exponential convergence in some cases when the existing \cite{butkovsky:2014} result does not work. And we may also compare the rate of convergence on this example by taking estimates for two \cite{shchegolev:2021} and three steps. We may consider the part of estimate $(1-\alpha_k  + \lambda_k)^{[n/k]}(1 + \lambda_1)^{[n \bmod k]}$ for the case $\alpha_k < \lambda_k$, since we have straightforward results in case $\alpha_k=\lambda_k$. In Table \ref{tab:Table1} we calculate the rate of convergence for $k=2$ and $k=3$ in limit values of the parameter $\gamma\in[0,1/2]$. Here we omit the multiplier $(1+\lambda_k)^{[n \bmod k]}$ for the case $\gamma=1/2$, since it does not impact much the convergence itself.

\begin{table} [!ht]
\caption{Rate of convergence calculation, $(1-\alpha_k  + \lambda_k)^{[n/k]}$.}
\begin{tabular}{|l|c|c|}
\hline
& $k=2$ & $k=3$ \\
\hline
$\gamma=0$& $0.707107^n$ & $0.629961^n$ \\
\hline
$\gamma=1/2$ &  $0.866025^n$ & $0.721125^n$\\
\hline
\end{tabular}
\centering
\label{tab:Table1}
\end{table}

We can see that the result obtained using three steps tend to be better given that the remainders $n \bmod k$ are the same.

\section{Law of large numbers}
This section presents a particular result of the law of large numbers for nonlinear Markov chains. In literature, there exist similar results, for example, \cite{delmoral:2011}, still, the model under consideration is quite different. However, such a result can be helpful in nonparametric estimation and serves as the basis for further statistics.

\begin{theorem}
Let $X_k^\mu$ be a nonlinear Markov chain defined on a measurable finite state space $(E, \mathcal{E})$ that satisfies the conditions of Theorem \ref{th1} and $f,g: E\rightarrow\mathbb{R}$ be a measurable bounded continuous functions. The function $f$ is also non-negative and should have uniform modulus of continuity.  Denote $S_n=\sum_{k=0}^{n-1}g(X_k^\mu)$, then the sequence $S_n$ satisfies the law of large numbers, such as, $n\rightarrow\infty$
\begin{flalign}
\left(\frac{S_n}{n}\right)\overset{\mathbb{P}} \longrightarrow f(\mathbb{E}[g(X^\pi_0)]),
\label{weaklln}
\end{flalign}
where $\mathbb{E}[g(X^\pi_n)]=\int g(x)\pi(dx)$, $X_k^\pi$ is a copy of $X_k^\mu$ with initial distribution equal to the invariant measure $\pi$.
\label{th3}
\end{theorem}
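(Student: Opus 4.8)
The plan is first to prove an ordinary weak law for $S_n/n$ itself, namely $S_n/n\overset{\mathbb{P}}{\longrightarrow}\pi(g)$ where $\pi(g):=\int g\,d\pi$, and then to deduce \eqref{weaklln} by applying $f$ and invoking its uniform continuity. The key structural remark is that, once the \emph{deterministic} marginal flow $(\mu_n)_{n\ge 0}$ has been fixed, $(X_n^\mu)$ is an ordinary time‑inhomogeneous Markov chain with transition kernels $P_{\mu_0},P_{\mu_1},\dots$; in particular, for $j\le l$ with $l-j=mk+r$, $0\le r<k$,
\begin{equation*}
\mathbb{P}\big(X_l^\mu\in\,\cdot\mid X_j^\mu=x\big)=\delta_x\,Q_{\mu_j}Q_{\mu_{j+k}}\cdots Q_{\mu_{j+(m-1)k}}P_{\mu_{j+mk}}\cdots P_{\mu_{l-1}},
\end{equation*}
because the internal kernels of $Q_{\mu_j}$ are exactly $P_{\mu_j},\dots,P_{\mu_{j+k-1}}$ (one uses $\mu_i(\mu_j)=\mu_{j+i}$). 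The same product applied to $\mu_j$ instead of $\delta_x$ produces $\mu_l$.

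Second, I would establish geometric decay of correlations. Taking $\mu=\nu=\mu_j$ in \eqref{NonlinearDobrushinAnalog2} and using the Dobrushin contraction lemma gives $\|\rho Q_{\mu_j}-\sigma Q_{\mu_j}\|_{TV}\le(1-\alpha_k)\|\rho-\sigma\|_{TV}$ for all probability measures $\rho,\sigma$, while each fixed single‑step kernel $P_{\mu_i}$ is a genuine total‑variation contraction. Transporting $\delta_x$ and $\mu_j$ through the common kernels above therefore gives, uniformly in $x$,
\begin{equation*}
\big\|\mathbb{P}(X_l^\mu\in\,\cdot\mid X_j^\mu=x)-\mu_l\big\|_{TV}\le 2(1-\alpha_k)^{[(l-j)/k]};
\end{equation*}
note that here, in contrast with Theorem \ref{th2}, no factor $(1+\lambda_1)$ appears and the decay is always geometric (since $\alpha_k>0$), because conditioning freezes the kernels at the true marginal flow and there is no feedback of the measure on itself, so the slow $\lambda_k=\alpha_k$ regime is irrelevant. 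Since $g$ is bounded, $|\int g\,d\rho-\int g\,d\sigma|\le\|g\|_\infty\|\rho-\sigma\|_{TV}$, and hence, writing $\mu_l(g)=\mathbb{E}[g(X_l^\mu)]$,
\begin{equation*}
\big|\mathrm{Cov}\big(g(X_j^\mu),g(X_l^\mu)\big)\big|=\Big|\mathbb{E}\big[g(X_j^\mu)\big(\mathbb{E}[g(X_l^\mu)\mid X_j^\mu]-\mu_l(g)\big)\big]\Big|\le 2\|g\|_\infty^2(1-\alpha_k)^{[(l-j)/k]}.
\end{equation*}

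Third, the weak law follows by the classical second‑moment argument. Summing the covariance bound over $0\le j\le l\le n-1$ and using $\sum_{d\ge 0}(1-\alpha_k)^{[d/k]}=k/\alpha_k$ gives $\mathrm{Var}(S_n)\le C\,n$ with $C=C(\|g\|_\infty,\alpha_k,k)$, so $\mathrm{Var}(S_n/n)\to 0$. For the mean, Theorem \ref{th1} gives $\|\mu_l-\pi\|_{TV}\to 0$, hence $\mu_l(g)\to\pi(g)$, and by Cesàro summation $\mathbb{E}[S_n/n]=\frac1n\sum_{l=0}^{n-1}\mu_l(g)\to\pi(g)$; combining this with Chebyshev's inequality yields $S_n/n\overset{\mathbb{P}}{\longrightarrow}\pi(g)=\mathbb{E}[g(X_0^\pi)]$. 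Finally, if $\omega_f$ denotes a uniform modulus of continuity of $f$, then $|f(S_n/n)-f(\pi(g))|\le\omega_f(|S_n/n-\pi(g)|)\to 0$ in probability, which is \eqref{weaklln}. The only step needing care is the correlation‑decay estimate: one must recognize that conditioning on $X_j^\mu=x$ leaves the kernels frozen at the true marginal flow, so that $k$‑step transitions are governed precisely by $Q_{\mu_j}$ and the Markov–Dobrushin bound \eqref{NonlinearDobrushinAnalog2} applies directly; the remainder is a routine $L^2$ weak‑law computation.
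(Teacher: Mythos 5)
Your proof is correct, but it takes a genuinely different route from the paper's. The paper first applies the Ibragimov--Linnik CLT to the stationary chain $X^\pi$ (viewed as a linear, $\beta$-mixing Markov chain) to get $S_n^\pi/n\to\pi(g)$, and then transfers this to $S_n/n$ by a density-ratio comparison of the kernels $P^{\mu_n}$ with $P^\pi$, which forces it to introduce mid-proof the extra assumption that the entries of $P^\pi$ are bounded away from zero and to rely on the exponential bound $\|\mu_n-\pi\|_{TV}\le 2e^{-Cn}$ (hence implicitly on $\lambda_k<\alpha_k$); the conclusion is reached through test functions, $\mathbb{E}[f(S_n/n)]\to f(\pi(g))$. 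You instead run a direct second-moment weak law on the inhomogeneous chain itself: your key observation --- that conditioning freezes the kernels at the deterministic marginal flow, so that the $k$-step conditional transition from time $j$ is exactly $Q_{\mu_j}$ and \eqref{NonlinearDobrushinAnalog2} with $\mu=\nu=\mu_j$ yields the classical Dobrushin contraction $\|\rho Q_{\mu_j}-\sigma Q_{\mu_j}\|_{TV}\le(1-\alpha_k)\|\rho-\sigma\|_{TV}$ --- is sound (indeed $\mu_i(\mu_j)=\mu_{j+i}$, so $Q_{\mu_j}=P_{\mu_j}\cdots P_{\mu_{j+k-1}}$ as a kernel product), and the resulting covariance decay, the $\mathrm{Var}(S_n)=O(n)$ bound, the Ces\`aro argument for the mean, and Chebyshev are all routine and correct. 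Your approach buys several things: it avoids the CLT machinery entirely, needs no positivity assumption on $P^\pi$, and only requires $\mu_l(g)\to\pi(g)$ at any rate plus $\alpha_k>0$, so it also covers the boundary case $\lambda_k=\alpha_k$ where convergence of the marginals is merely polynomial. One remark on the statement itself: as written, \eqref{weaklln} asserts $S_n/n\overset{\mathbb{P}}{\to}f(\mathbb{E}[g(X_0^\pi)])$, which is evidently a misprint --- the paper's own proof establishes $\mathbb{E}[f(S_n/n)]\to f(\pi(g))$ for nonnegative $f\in\mathcal{C}_b$, i.e.\ $S_n/n\overset{\mathbb{P}}{\to}\pi(g)$; your argument proves exactly this intended claim (and the continuous-mapping consequence $f(S_n/n)\overset{\mathbb{P}}{\to}f(\pi(g))$), so the discrepancy is with the statement's typography, not with your proof.
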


\begin{proof}
First, we can notice that $X_k^\pi$, which is, in fact, a homogeneous ``linear'' Markov chain that does not depend on the measure, and it is also $\beta$-mixing due to the \eqref{NonlinearDobrushinAnalog2}. Denote  $\xi_k = g(X_k^\pi) - \mathbb{E}[g(X^\pi_k)]$ and consider the sequence of sums $\tilde S^\pi_n=\sum_{k=0}^{n-1}\xi_k$.

Then, according to the results of Ibragimov and Linnik \cite[Theorem~18.5.3]{ibragimov:1965}, we have that $\mathbb{E}[\xi_k|^{2+\delta}]<\infty$ for some $\delta > 0$ and $\sum_{n=1}^{\infty}\alpha(n)^{\delta/(2+\delta)} <\infty$, where $\alpha(n)$ is a mixing coefficient. The variance of $\tilde S^\pi_n$ is finite, $\sigma^2 = \text{Var}(\tilde S^\pi_n) = \sum_{j=-\infty}^{\infty}\text{cov}(\xi_0,\xi_j) < \infty$. Then, the sequence $\tilde S^\pi_n$ satisfies the central limit theorem, $\tilde S^\pi_n \sim \mathcal{N}(0, \sigma^2/n)$. The case $\sigma^2 = 0$ can be interpreted as a degenerate Gaussian distribution, or Dirac delta distribution concentrated at the point $0$. 

Therefore, the process $S^\pi_n$ also satisfies the central limit theorem, implying the weak law of large numbers,
\begin{flalign}
S^\pi_n/n \overset{\mathbb{P}}{\longrightarrow} \mathbb{E}[g(X^\pi_0)].
\label{stationarylln}
\end{flalign}

Next, we need to show that $S_n/n - S^\pi_n/n$ converges to $0$.
In order to prove this convergence, a coupling construction technique for nonlinear Markov chains similar to the method presented in \cite{veretennikov:2013,veretennikov:2020,veretennikov:2021} can be developed. 
 
Alternatively, we can use the estimates for convergence from Theorem \ref{th1} to show that for $n \rightarrow \infty$, $\forall f\in \mathcal{C}_b$, $f \ge 0$, $\mathbb{E}\left[f(S_n/n)\right] \rightarrow f\left(\mathbb{E}\left[g(X_0^\pi)\right]\right)$.

Let 
$$\mathbb{E}[f(S_{n}/n)] = \iint\dots\int f\left(\frac{x_0+x_1+\dots+x_{n-1}}{n}\right)\mu_0(x_0)P^{\mu_0}_{x_0,dx_1}P^{\mu_1}_{x_1,dx_2}\dots P^{\mu_{n-2}}_{x_{n-2},dx_{n-1}}.$$

From the Theorem \ref{th1}, we get that $\|\mu_n-\pi\|_{TV} \le 2e^{-Cn}$, where $C$ is a constant that does not depend on $n$. Then assume that all elements of $P^{\pi}_{x,y}$ are bounded away from zero uniformly with respect to the measure,
$$
-\frac{\|\mu_n-\pi\|_{TV}}{2P^{\pi}_{x,y}} \le \frac{P^{\mu_n}_{x,y} - P^{\pi}_{x,y}}{P^{\pi}_{x,y}} \le \frac{\|\mu_n-\pi\|_{TV}}{2P^{\pi}_{x,y}},
$$
hence we deduce
$$
1 - e^{\ln K - Cn} \le 1 - \frac{e^{-Cn}}{P^\pi_{x,y}} \le \frac{P^{\mu_n}_{x,y}}{P^{\pi}_{x,y}} \le 1 + \frac{e^{-Cn}}{P^\pi_{x,y}} \le 1 + e^{\ln K - Cn}.
$$

Then, we can assume that starting from some moment $n_0+1$ such that $n_0 \ll n$, the transition kernels $P^{\mu_n}_{x,y}$ are close to the invariant transition kernel $P^{\pi}_{x,y}$. We may define a transition kernel for $n_0$ steps, $Q^{\mu_0, n_0}_{x_0, dx_{n_0}}$, such that
$$
\mu_{n_0} = \mu_0 Q^{\mu_0, n_0}_{x_{0}, dx_{n_0}}(dx_{n_0}) = \int \mu_0(dx_0)Q^{\mu_0, n_0}(x_0, dx_{n_0}).
$$

Then we can denote $\rho_f$ the modulus of continuity of function $f$, $\left|\sum_{i=0}^{n_0}x_i/n\right| \le \delta$ and rewrite

{\scriptsize
\begin{flalign*}
\mathbb{E}[f(S_{n}/n)] =& \iint\dots\int f\left(\frac{x_0+\dots+x_{n_0}}{n} + \frac{x_{n_0+1}+\dots+x_{n-1}}{n}\right)\mu_0Q^{\mu_0, n_0}(dx_{n_0})\prod_{k=n_0}^{n-2}P^{\mu_k}_{x_k,dx_{k+1}}\\
\le&
\rho_f(\delta) + \iint\dots\int f\left(\frac{x_{n_0+1}+\dots+x_{n-1}}{n}\right)
\left(1+e^{\ln K - C(n_0-1)}\right)\pi(dx_{n_0})\\
&\times
\prod_{k=n_0}^{n-2}\left(1+e^{\ln K - Ck}\right)P^{\pi}_{x_k,dx_{k+1}}\\
\le& \rho_f(\delta) + \prod_{k=n_0-1}^{n-2}\left(1+e^{\ln K - Ck}\right)\\&\times
\left(\iint\dots\int f\left(\frac{\sum_{i=n_0+1}^{n-1}x_i}{n}\right)
\pi(dx_{n_0})\prod_{k=n_0+1}^{n-2}P^{\pi}_{x_k,dx_{k+1}}\right).
\end{flalign*}}

Then, for the invariant distribution we have
\begin{flalign*}
\mathbb{E}[f(S^\pi_{n}/n)] =& \iint\dots\int f\left(\frac{x_0+\dots+x_{n_0}}{n} + \frac{x_{n_0+1}+\dots+x_{n-1}}{n}\right)\pi(dx_{n_0} )\prod_{k=n_0}^{n-2}P^{\pi}_{x_k,dx_{k+1}}\\
\ge& - \rho(\delta) + \iint\dots\int f\left(\frac{\sum_{i=n_0+1}^{n-1}x_i}{n}\right)
\pi(dx_{n_0})\prod_{k=n_0+1}^{n-2}P^{\pi}_{x_k,dx_{k+1}}.
\end{flalign*}

Hence, we may obtain
\begin{flalign*}
\mathbb{E}[f(S_{n}/n)] &\le 
\rho_f(\delta) + \left(\rho_f(\delta) +  \mathbb{E}[f(S_n^\pi/n)]\right)\prod_{k=n_0-1}^{n-2}\left(1+e^{\ln K - Ck}\right).
\end{flalign*}

Analogically we can derive the following result for the lower bound of $\mathbb{E}[f(S_{n}/n)]$, that is
{\scriptsize
\begin{flalign*}
\mathbb{E}[f(S_{n}/n)]
\ge&
-\rho_f(\delta) + \iint\dots\int f\left(\frac{x_{n_0+1}+\dots+x_{n-1}}{n}\right)
\left(1-e^{\ln K - C(n_0-1)}\right)\pi(dx_{n_0})\\
&\times
\prod_{k=n_0}^{n-2}\left(1-e^{\ln K - Ck}\right)P^{\pi}_{x_k,dx_{k+1}}\\
\ge& -\rho_f(\delta) + \prod_{k=n_0-1}^{n-2}\left(1-e^{\ln K - Ck}\right)\\&\times
\left(\iint\dots\int f\left(\frac{\sum_{i=n_0+1}^{n-1}x_i}{n}\right)
\pi(dx_{n_0})\prod_{k=n_0+1}^{n-2}P^{\pi}_{x_k,dx_{k+1}}\right)\\
\ge& -\rho_f(\delta) + \left(\mathbb{E}[f(S_n^\pi/n)]-\rho_f(\delta)\right)\times \prod_{k=n_0-1}^{n-2}\left(1-e^{\ln K - Ck}\right).
\end{flalign*}}

Next, we can consider the bounds for $\mathbb{E}[f(S_n/n)]$,
{\scriptsize
\begin{flalign*}
- \rho_f(\delta) +\left(\mathbb{E}[f(S_n^\pi/n)]-\rho_f(\delta)\right) \prod_{k={n_0}-1}^{n-2}\left(1-e^{\ln K - Ck}\right) \le 
\mathbb{E}[f(S_n/n)]\\ \le
\rho_f(\delta) +\left(\rho_f(\delta) +  \mathbb{E}[f(S_n^\pi/n)]\right)\prod_{k={n_0}-1}^{n-2}\left(1+e^{\ln K - Ck}\right).
\end{flalign*}}

Notice that  $\rho_f(\delta)$ as $\delta \to 0$, and that  
both multipliers {\scriptsize$\prod_{k={n_0}-1}^{n-2}\left(1-e^{\ln K  
- Ck}\right)$} and {\scriptsize $\prod_{k={n_0}-1}^{n-2}\left(1+e^{\ln K  
- Ck}\right)$} approach $1$ as $n_0\to\infty$ uniformly  
over $n>n_0$, while $\mathbb{E}[f(S_n/n)]$ itself does not depend on $n_0$. Thus, due to \eqref{stationarylln} we obtain  
for any non-negative $f \in \mathcal{C}_b$,  
$$  
\limsup_{n\to\infty} \mathbb{E}[f(S_n/n)] \le  
f\left(\mathbb{E}[g(X_0^\pi)]\right),  
$$  
and  
$$  
\liminf_{n\to\infty} \mathbb{E}[f(S_n/n)] \ge  
f\left(\mathbb{E}[g(X_0^\pi)]\right),  
$$  
which implies  
$$  
\lim_{n\to\infty} \mathbb{E}[f(S_n/n)] =  
f\left(\mathbb{E}[g(X_0^\pi)]\right),  
$$  
which is equivalent to the convergence in probability \eqref{weaklln} as required. Theorem \ref{th3} is proved.
\end{proof}

\subsection*{Acknowledgements}
The author is grateful to Professor A. Yu. Veretennikov for his valuable and constructive advice.

\subsection*{Funding}
Russian Foundation for Basic Research, 20-01-00575.\newline http://dx.doi.org/10.13039/501100002261.

\end{document}